\documentclass{amsart}

\usepackage{amssymb,amsmath,amsthm}
\usepackage{graphics}
\usepackage[arrow, matrix, curve]{xy}
\usepackage{txfonts}
\usepackage{bbm}

\theoremstyle{definition}
\newtheorem{definition}{Definition}

\theoremstyle{plain}
\newtheorem{theorem}[definition]{Theorem}

\newtheorem{lemma}[definition]{Lemma}

\theoremstyle{remark}

\theoremstyle{dotless}

\newcommand\I{\mathbbm{1}}

\newcommand{\N}{\mathbb{N}}

\newcommand{\R}{\mathbb{R}}
\newcommand{\T}{\mathbb{T}}
\newcommand{\Z}{\mathbb{Z}}

\renewcommand{\P}{\mathcal{P}}

\newcommand{\eps}{\varepsilon}

\newcommand{\me}{^{-1}}

\title{An ultrafilter approach to Jin's Theorem}
 \author{Mathias Beiglb\"ock}
 \address{Fakult\"at f\"ur Mathematik, Universit\"at Wien\endgraf
Nordbergstra\ss e 15\\ 1090 Wien, Austria}
\email{mathias.beiglboeck@univie.ac.at}
\thanks{The author acknowledges financial support from the Austrian Science
Fund (FWF) under grant P21209.}

\subjclass[2000]{11B05, 05D10} \keywords{density, piecewise syndetic, piecewise Bohr, sumsets, ultrafilters}
\begin{document}
\maketitle
\dedicatory{ This paper is dedicated to Dona Strauss on the occasion of her 75th birthday. }
\begin{abstract}
It is well known and not difficult to prove that if $C\subseteq\Z$ has positive upper Banach density, the set of differences $C-C$ is syndetic, i.e.\ the length of gaps is uniformly bounded. More surprisingly, Renling Jin showed that whenever $A$ and $B$ have positive upper Banach density, then $A-B$ is \emph{piecewise syndetic}. 

Jin's result follows trivially from the first statement provided that $B$ has large intersection with a shifted copy $A-n$ of $A$. Of course this will not happen in general if we consider shifts by integers, but the idea can be put to work if we allow ``shifts by ultrafilters''. As a consequence we obtain Jin's Theorem.
\end{abstract}

The \emph{upper Banach density} of  $C\subseteq \Z$ is given by $d^*(C):= \overline\lim_{m-n\to \infty} \frac1{m-n+1}|C\cap \{n,\ldots, m\}|.$ A set $S\subseteq \Z$ is syndetic if the gaps of $S$ are of uniformly bounded length, i.e.\ if there exists some $k>0$ such that $S-\{-k, \ldots, k\}=\Z$. A set $P\subseteq \Z$ is piecewise syndetic if it is syndetic on large pieces, i.e.\ if there exists some $k\geq 0$ such that $P-\{-k, \ldots, k\}$ contains arbitrarily long intervals of integers. 

It was first noted by F\o lner (\cite{Foln54a,Foln54b}) that  $C-C=\{c_1-c_2:c_1, c_2\in C\}$ is syndetic provided that $d^*(C)>0$. (To see this, pick a subset $\{i_1, \ldots, i_m\}\subseteq \Z$ which is \emph{maximal} subject to the condition that  $C-i_1,\ldots, C-i_m$ are mutually disjoint. This is possible  since disjointness implies $d^*(C-i_1\cup\ldots\cup C-i_m)=m \cdot d^*(C)$, therefore $m$ is at most $1/d^*(C)$. But then maximality implies that for each  $n\in  \Z$ there is some $i_k$, $k\in \{1,\ldots, m\}$ such that $(C-n)\cap(C-i_k)\neq \emptyset$, resp.\ $n\in (C-C)+i_k$. Thus $\bigcup_{k=1}^m (C-C) +i_k=\Z$.)



 Simple counterexamples yield that the analogous statement fails when two different sets are considered, but Renling Jin discovered the following interesting result.
\begin{theorem}[\cite{Jin02}]\label{IntegerJin}
Let $A,B\subseteq \Z, d^*(A), d^*(B)>0$. Then $A-B$ is piecewise syndetic.
\end{theorem}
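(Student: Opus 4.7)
The plan is to emulate F\o lner's argument (given above for the case $A=B$) while permitting the shift in ``$A-n$'' to be an ultrafilter. Indeed, if some integer $n$ satisfied $d^*(B \cap (A-n)) > 0$, then F\o lner applied to $C_n := B \cap (A-n)$ would make $C_n - C_n$ syndetic; and since for $m_1, m_2 \in C_n$ one has $m_2+n\in A$, $m_1\in B$ giving $(m_2+n)-m_1 \in A-B$, this would force $(C_n - C_n) + n \subseteq A - B$ and hence $A-B$ syndetic. No such integer $n$ need exist, and the idea is to substitute an ultrafilter.

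For $p \in \beta\Z$ and $S\subseteq\Z$, set $S - p := \{n \in \Z : S - n \in p\}$; equivalently, $S - p$ is the set of integers $n$ with $n+p \in \bar S \subseteq \beta\Z$, which reduces to the usual $S-k$ when $p$ is the principal ultrafilter at $k$. The central step is to produce an ultrafilter $p$ with $A \in p$ and $d^*(B \cap (A - p)) > 0$. Two ingredients should drive this: first, the family $\mathcal{E} := \{ E \subseteq \Z : d^*(E)>0 \}$ is a coideal --- by subadditivity of $d^*$, if $d^*(E_1 \cup E_2)>0$ then $d^*(E_1)>0$ or $d^*(E_2)>0$ --- so $\mathcal{E}$ contains ultrafilters. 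Second, fixing translation-invariant means $\mu,\nu$ with $\mu(B),\nu(A)>0$, a Fubini-type interchange yields $\int \mu(B \cap (A-x)) \, d\nu(x) = \nu(A)\mu(B) > 0$, so $\mu(B \cap (A-x))$ is positive on a $\nu$-large set of $x$'s; an ultrafilter $p \in \mathcal{E}$ concentrated on that set and containing $A$ should then satisfy $d^*(B \cap (A-p)) > 0$ via a second exchange between $p$-limits and $\mu$.

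Granting such $p$, let $C := B \cap (A - p)$. F\o lner's theorem yields a finite $H \subseteq \Z$ with $(C - C) + H = \Z$. For any finite $F \subseteq C$ the intersection $\bigcap_{m \in F}(A-m)$ lies in $p$, so is nonempty; pick $a_F$ in it, so $a_F+m \in A$ for every $m \in F$, and (since $F \subseteq B$) $a_F + (F - F) \subseteq A - B$. Given any finite $\{n_1, \ldots, n_k\} \subseteq \Z$, syndeticity of $C-C$ furnishes pairs $(m_1^{(i)}, m_2^{(i)}) \in C^2$ and $h_i \in H$ with $n_i = m_1^{(i)} - m_2^{(i)} + h_i$; taking $F := \{m_j^{(i)}\}$ we deduce $\{n_1,\ldots,n_k\} + a_F \subseteq (A-B) + H$. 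Specializing $\{n_1,\ldots,n_k\}$ to an interval of length $k$ shows $(A-B) + H$ is thick, so $A-B$ is piecewise syndetic.

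The non-routine step is the construction of $p$. All of the positive-density hypothesis on $A$ is absorbed there, and the ultrafilter must simultaneously contain $A$ and interact densely with $B$ through the ``shift'' operation. This ultrafilter-theoretic input is what replaces the Loeb-measure machinery on a hyperfinite interval used in Jin's original nonstandard proof.
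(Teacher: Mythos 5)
Your reduction at the end is correct and is exactly the paper's: once an ultrafilter $p$ with $d^*(B\cap(A-p))>0$ is in hand, F\o lner's argument applied to $C:=B\cap(A-p)$ together with the finite intersection property of $p$ places a translate of every finite subset of the syndetic set $C-C$ inside $A-B$, whence $A-B$ is piecewise syndetic. The genuine gap is in the step you yourself flag as non-routine: the construction of $p$. Two of the claims in that step are unjustified and, as stated, false in general. First, finitely additive invariant means do not commute, so the ``Fubini-type interchange'' $\int\mu(B\cap(A-x))\,d\nu(x)=\nu(A)\mu(B)$ has no justification (standard example: for $\mu,\nu$ both obtained as limits of averages along intervals tending to $+\infty$, one has $\nu_x\big(\mu_k(\I_{\{k>x\}})\big)=1$ while $\mu_k\big(\nu_x(\I_{\{k>x\}})\big)=0$). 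Second, and more seriously, the membership conditions you impose on $p$ --- that $A\in p$, that $p$ concentrate on $D:=\{x:\mu(B\cap(A-x))\geq\delta\}$, and that $p$ lie inside the coideal $\mathcal{E}$ --- do not imply $d^*(B\cap(A-p))>0$; the ``second exchange between $p$-limits and $\mu$'' fails. Concretely, take $B=\Z$ and $A$ a normal set of density $1/2$ (every finite $0$--$1$ pattern occurs with its expected frequency). Then each finite intersection $A\cap\bigcap_{k\in F}\big(\Z\setminus(A-k)\big)$ with $0\notin F$ has upper density $2^{-|F|-1}>0$, so by partition regularity of upper Banach density there is an ultrafilter $p\subseteq\mathcal{E}$ containing $A$ and every $\Z\setminus(A-k)$, $k\neq 0$. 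This $p$ satisfies all your conditions (here $D=\Z$), yet $A-p=\{0\}$, so $d^*(B\cap(A-p))=0$.

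The moral is that a good $p$ cannot be pinned down by membership conditions of this kind; it must be produced by an averaging argument over $\beta\Z$. That is precisely what the paper's Lemma \ref{ShiftT} does: lift the invariant mean $\mu$ with $\mu(A)=d^*(A)$ to a countably additive regular Borel measure $\tilde\mu$ on $\beta\Z$ via Riesz representation, set $f_n(p)=|I_n\cap B\cap(A-p)|/|I_n|$ along intervals $I_n$ witnessing $d^*(B)$, and apply Fatou's lemma to obtain $\int\overline\lim_n f_n\,d\tilde\mu\geq\overline\lim_n\int f_n\,d\tilde\mu=d^*(A)\cdot d^*(B)$; hence some $p$ has $\overline\lim_n f_n(p)\geq d^*(A)\cdot d^*(B)$, and $\overline\lim_n f_n(p)\leq d^*(B\cap(A-p))$. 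The countable additivity of $\tilde\mu$ in the ultrafilter variable is exactly what legitimizes the interchange your sketch performs only formally. If you replace your two exchanges by this Fatou argument, the rest of your proof goes through verbatim.
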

 In Section 1 we reprove Jin's Theorem by reducing it to the $C-C$ case. Subsequently we discuss some modifications of our argument which allow to recover the refinements resp.\ generalizations of Jin's result found in \cite{JiKe03, BeFW06, Jin08, BeBF09}. 
\section*{1\ \ \  Ultrafilter proof of Jin's Theorem}
 As indicated in the abstract, we aim to show that one can shift a given large set $A\subseteq \Z$ by an ultrafilter so that it will have large intersection with another, previously specified set.  We motivate the definition of this ultrafilter-shift by means of analogy: For $n\in\Z$  denote by $e(n)$ the principle ultrafilter on $\Z$ which corresponds to $n$ and notice that
$$A-n=\{k\in \Z : n\in A-k \}= \{k\in \Z :  A-k \in e(n) \}.$$
Given an ultrafilter $p$ on $\Z$, we thus define $\{k\in \Z : A-k \in p\}$ as the official meaning of ``$A-p$''.  
\begin{lemma}\label{ShiftT}
Let $A,B\subseteq \Z$. Then there exists an ultrafilter $p$ on $ \Z$ such that 
$$d^*\Big((A-p) \cap B \Big)\ =\ d^*\Big(\{k:A-k\in p\}\cap B\Big)\ \geq\  d^*(A)\cdot d^*(B).$$
\end{lemma}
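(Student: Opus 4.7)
The plan is to produce $p$ by an averaging argument over the Stone-\v{C}ech compactification $\beta\Z$ rather than by a direct construction. By amenability of $\Z$, I can choose a translation-invariant mean $\nu$ on $\ell^\infty(\Z)$ with $\nu(\I_A) = d^*(A)$, obtained as a weak-$*$ cluster point of the uniform probability measures on a F\o lner sequence witnessing $d^*(A)$. Via the Riesz representation theorem I identify $\nu$ with a regular Borel probability measure on $\beta\Z$, and translation invariance then yields
\[
\nu\bigl(\bigl\{p \in \beta\Z : A - k \in p\bigr\}\bigr)\ =\ \nu(\I_{A - k})\ =\ d^*(A)\qquad\text{for every } k \in \Z,
\]
since $\{p : A - k \in p\}$ is simply the clopen subset of $\beta\Z$ encoding $A - k$.

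The key computation is then a one-line Fubini identity. For any interval $I \subseteq \Z$, the function $X_I : \beta\Z \to [0,1]$ with $X_I(p) := |(A - p) \cap B \cap I|/|I|$ is a normalized sum over $k \in B \cap I$ of the clopen indicators $\I_{A - k \in p}$, hence continuous on $\beta\Z$, and the invariance identity yields
\[
\int_{\beta\Z} X_I\, d\nu\ =\ \frac{|B \cap I|}{|I|}\cdot d^*(A).
\]
Choosing a sequence of intervals $I_\ell$ with $|I_\ell|\to\infty$ and $|B \cap I_\ell|/|I_\ell| \to d^*(B)$, these integrals converge to $d^*(A)d^*(B)$. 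Since $X_{I_\ell} \leq 1$, reverse Fatou gives $\int_{\beta\Z} \limsup_\ell X_{I_\ell}\, d\nu \geq d^*(A)d^*(B)$. If $\limsup_\ell X_{I_\ell}(p) < d^*(A)d^*(B)$ held $\nu$-almost everywhere, then $d^*(A)d^*(B) - \limsup_\ell X_{I_\ell}$ would be a strictly positive integrable function on a probability space, forcing the integral to be strictly smaller than $d^*(A)d^*(B)$, a contradiction. So some $p \in \beta\Z$ satisfies $\limsup_\ell X_{I_\ell}(p) \geq d^*(A)d^*(B)$, and because $|I_\ell|\to\infty$ this directly implies $d^*((A - p) \cap B) \geq d^*(A)d^*(B)$.

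The only real obstacle is bookkeeping: the identification of a mean on $\ell^\infty(\Z)$ with a Borel measure on $\beta\Z$, the verification that $X_I$ is genuinely continuous on $\beta\Z$, and a careful application of reverse Fatou. The combinatorial content of the proof is entirely captured by the shift-invariance identity $\nu(\I_{A - k}) = d^*(A)$, which reduces the problem to a single linearity computation.
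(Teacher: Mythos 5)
Your proposal is correct and follows essentially the same route as the paper: an invariant mean realizing $d^*(A)$ is transferred to a Borel measure on $\beta\Z$, the function $p\mapsto |(A-p)\cap B\cap I_n|/|I_n|$ (the paper's $f_n$) is integrated using shift-invariance to get $\frac{|B\cap I_n|}{|I_n|}d^*(A)$, and reverse Fatou produces a point $p$ where the limsup, hence $d^*((A-p)\cap B)$, is at least $d^*(A)d^*(B)$. The only cosmetic difference is that you spell out the positivity argument for extracting $p$ from the integral inequality, which the paper leaves implicit.
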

The proof of Lemma \ref{ShiftT} requires some preliminaries.

For a fixed set $A\subseteq \Z$ we can choose an invariant mean, i.e.\ a shift invariant finitely additive probability measure $\mu$ on $(\Z, \P(\Z))$) such that $d^*(A)=\mu(A)$. 
This is well known among aficionados (see for instance \cite[Theorem 5.8]{Berg06}) and not difficult to prove. First pick a sequence of finite intervals $I_n\subseteq \Z$ such that $|I_n|\to \infty$ and $d^*(A)= \lim_{n\to \infty} \mu_n(A)$ where $\mu_n(B):=
\frac1{|I_n|}|B\cap I_n|$ for $B\in \P(B)$.  Then let $\mu$ be a cluster point of the set $\{\mu_n: n\in\N\} $  in the (compact) product topology of $[0,1]^{\P(\Z)}$.




We consider the Stone-\v Cech compactification $\beta \Z$ of  the discrete space $\Z$. For our purpose it is convenient to view $\beta \Z$ as the set of all ultrafilters on $\Z$. By identifying integers with principal ultrafilters, $\Z$ is naturally embedded in $\beta \Z$. A clopen basis for the topology is  given by the sets $\overline C:= \{p\in\beta \Z : C\in p\}$, where $C\subseteq \Z$. 
A mean $\mu$ on $\Z$ gives rise to a positive linear functional $\Lambda$ on the space $B(\Z)$ of bounded functions on $\Z$. Making the identification $B(\Z)\cong C(\beta\Z)$ we find that, by the Riesz representation Theorem, there exists a regular Borel probability measure $\tilde\mu$ on $\beta \Z$ which corresponds to the mean $\mu$ in the sense that $\mu(A)=\tilde \mu\big(\overline A\big)$ for all $A\subseteq \Z$. (This procedure is carried out in detail for instance in \cite[p 11]{Pate88}.)



\begin{proof}[Proof of Lemma \ref{ShiftT}.]
Pick a sequence of intervals $I_n\subseteq \Z,|I_n|\uparrow \infty $ such that  $d^*(B)=\lim_{n}Ê\frac {|I_n\cap B|}{|I_n|}$. Pick an invariant mean $\mu$ such that $\mu (A)=d^*(A)$. Define $f_n:\beta \Z\to [0,1]$ by
$$f_n(p):=\frac1{|I_n|} \sum_{k\in  I_n\cap B} \I_{\overline {A-k}}(p)=\frac {|I_n\cap B \cap \{k:A-k\in p\}|}{|I_n|} $$ and set $f(p):= \overline \lim_n f_n(p)\leq d^*(B\cap \{k:A-k\in p\}).$  By Fatou's Lemma
$$ \int f\, d\tilde \mu\geq \overline{\lim_{n\to \infty}} \int \frac1{|I_n|} \sum_{k\in I_n\cap B}  \I_{\overline {A-k}}\, d\tilde \mu=  
\overline{\lim_{n\to \infty}} \frac1{|I_n|} \sum_{k\in I_n\cap B}  \mu( {A-k})
=  d^*(A) \cdot d^*(B),$$
thus there exists $p\in \beta \Z$ such that $d^*(A)\cdot d^*(B)\leq f(p).$
\end{proof}
The above  application of Fatou's Lemma is inspired by the proof of \cite[Theorem 1.1]{Berg85}.


\begin{proof}[Proof of Theorem \ref{IntegerJin}.]
Assume that $d^*(A), d^*(B)>0$. According to Lemma \ref{ShiftT}, pick an ultrafilter $p$ such that $C:=(A-p)\cap B$ has positive upper Banach density. Then $S:=(A-p)-B\supseteq C-C$ is syndetic. Also $s\in (A-p)-B \ \Longrightarrow \ A-B-s\in p$. 
Thus for each finite set $\{s_1, \ldots, s_n\}\subseteq (A-p)-B$, we have $\bigcap_{i=1}^n A-B- s_i\in p$. In particular this intersection is non-empty, hence there exists $t\in \Z$ such that $t+\{s_1, \ldots, s_n\}\subseteq A-B$. 
Summing up, we find that $A-B$ is piecewise syndetic since it contains shifted copies of all finite subsets of the syndetic set $(A-p)-B$.
\end{proof}


\section*{2\ \ \ Jin's Theorem in countable amenable (semi-) groups}
Following Jin's original work, it was shown in \cite{JiKe03} that Theorem \ref{IntegerJin} is valid in a certain class of abelian groups (including in particular $\Z^d$) and in \cite{Jin08} that it holds in $\oplus_{i=1}^\infty \Z$. Answering a question posed in \cite{JiKe03} it is proved in \cite[Theorem 2]{BeBF09} that Jin's theorem extends to all countable groups in which the notion of upper Banach density can naturally be formulated, that is, to all countable amenable groups.  

There is overwhelming evidence (see in particular \cite{HiSt98}) that whenever ultrafilters can be used to prove a certain combinatorial statement, then this ultrafilter proof will automatically work in a quite abstract setup. In this spirit the approach of Section 1 effortlessly yields the just mentioned strengthenings of Jin's Theorem; in fact  it is not even necessary to restrict the setting to groups.

If a semigroup $(S,\cdot)$ admits left- and right F\o lner sequences\footnote{A sequence $(F_n)_{n\in\N}$ of finite sets in a semigroup $S$ is a left/right F\o lner sequence if $\lim_{n\to \infty} |(sF_n)\Delta F_n|/|F_n|=0$ resp.\ $\lim_{n\to \infty} |(F_ns)\Delta F_n|/|F_n|=0$ for all $s\in S$. The existence of F\o lner sequences is sometimes used to define if a countable group / semigroup is amenable. We note that all  abelian semigroups and all solvable groups fall in this class. See for instance \cite{Pate88}.}, the notions $d^*_L,d^*_R$  of left- resp.\ right upper Banach density are defined analogously to upper Banach density in $\Z$, but with left- resp.\ right F\o lner sequences taking the role of intervals $\{n,\ldots, m\}$. Arguing precisely as in Section 1 we then get that for all $A,B\subseteq S$ there is an ultrafilter $p$ on $S$ such that 
$d_L^*\Big(B\cap (q\me A)\Big)\ =\ d_L^*\Big(B\cap \{s:As\me\in p\}\Big)\ \geq\  d_R^*(A)\cdot d_L^*(B).$ Consequently we have:
\begin{theorem}\label{SemigroupJin}
Let $(S,\cdot)$ be a semigroup which admits left- and right F\o lner sequences and let $A,B\subseteq S$, $d_R^*(A), d_L^*(B)>0$. Then $AB\me=\{s\in S:\exists b\in B\ sb\in A\}$ is (right) piecewise syndetic.
\end{theorem}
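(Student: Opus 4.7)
The plan is to parallel the proof of Theorem~\ref{IntegerJin} step by step, using the semigroup analogue of Lemma~\ref{ShiftT} stated in the paragraph preceding the theorem. That variant hands us an ultrafilter $p\in \beta S$ for which
\[
C\ :=\ B\cap\{s\in S:As^{-1}\in p\}
\]
satisfies $d_L^*(C)\ge d_R^*(A)\,d_L^*(B)>0$, where $As^{-1}:=\{x\in S:xs\in A\}$.

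First I would establish the semigroup version of F\o lner's observation: if $d_L^*(C)>0$ then $CC^{-1}:=\{s\in S:sC\cap C\ne\emptyset\}$ is syndetic (in the appropriate sense). The argument transcribes the one sketched in the introduction: choose a maximal finite set $\{i_1,\ldots,i_m\}\subseteq S$ such that $i_1C,\ldots,i_mC$ are pairwise disjoint. A mean $\mu$ arising from a left F\o lner sequence and normalized so that $\mu(C)=d_L^*(C)$ is left-invariant, so $\mu(i_kC)=d_L^*(C)$ and therefore $m\le 1/d_L^*(C)$. Maximality then yields that for every $s\in S$ some intersection $sC\cap i_kC$ is non-empty, i.e.\ $s\in i_k\cdot CC^{-1}$, whence $\{i_1,\ldots,i_m\}\cdot CC^{-1}=S$.

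Next I would note the inclusion $CC^{-1}\subseteq T:=\{s\in S:\exists b\in B,\ A(sb)^{-1}\in p\}$ (if $sc=c'$ with $c,c'\in C$, then $b:=c\in B$ satisfies $A(sb)^{-1}=A(c')^{-1}\in p$ because $c'\in C$), and furthermore that $s\in T$ implies $AB^{-1}s^{-1}\in p$. Indeed, if $b\in B$ witnesses $s\in T$, then $A(sb)^{-1}=\{x:xsb\in A\}\subseteq\{x:xs\in AB^{-1}\}=AB^{-1}s^{-1}$, so the larger set also belongs to $p$. Since ultrafilters are closed under finite intersection, for every finite $\{s_1,\ldots,s_n\}\subseteq T$ the intersection $\bigcap_{i=1}^n AB^{-1}s_i^{-1}$ lies in $p$, hence is non-empty, providing some $t\in S$ with $t\{s_1,\ldots,s_n\}\subseteq AB^{-1}$. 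Thus $AB^{-1}$ contains a left translate of every finite subset of the syndetic set $T$, which is precisely right piecewise syndeticity.

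The point demanding the most care is the left/right bookkeeping: the ultrafilter shift $\{s:As^{-1}\in p\}$ is built from \emph{right} F\o lner averages of $A$ (which explains why $d_R^*(A)$ enters Lemma~\ref{ShiftT}), while the density of $B$ and the F\o lner-type argument of Step~1 use \emph{left} F\o lner sequences / left-invariant means and produce a form of syndeticity compatible with the final \emph{right} piecewise syndeticity conclusion. Reassuringly, the combinatorial identity $A(sb)^{-1}\subseteq AB^{-1}s^{-1}$ relies only on associativity, so no cancellativity or group hypothesis is ever invoked, which is what permits the clean extension from $\Z$ to an arbitrary amenable semigroup.
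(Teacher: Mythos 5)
The ultrafilter half of your argument --- the inclusion $CC^{-1}\subseteq T$, the implication $s\in T\Rightarrow AB^{-1}s^{-1}\in p$, and the finite--intersection step producing $t$ with $t\{s_1,\ldots,s_n\}\subseteq AB^{-1}$ --- is correct and is exactly what the paper does. The gap is in your Step 1, the syndeticity of $CC^{-1}$, and it is a genuine one. First, from $sC\cap i_kC\neq\emptyset$, i.e.\ $sc=i_kc'$ with $c,c'\in C$, you cannot conclude $s\in i_k\cdot CC^{-1}$ in a semigroup: that would require solving for $s$, i.e.\ cancelling $c$ on the right and $i_k$ on the left. All that follows is $s\in (i_kC)C^{-1}$, and the covering $S=\bigcup_k (i_kC)C^{-1}$ says nothing directly about $CC^{-1}$. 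Second, and more seriously, even in a group your maximal-disjoint-left-translates argument produces a \emph{left}-sided covering $S=\bigcup_k i_k(CC^{-1})$, which has the wrong sidedness for your final step. Right piecewise syndeticity asks for $tF\subseteq AB^{-1}K^{-1}$ with the finite correcting set $K$ acting on the \emph{right}; to deduce it from ``left translates of finite subsets of $T$ land in $AB^{-1}$'' you need $T$ to be \emph{right} syndetic, $TG^{-1}=S$: then each $f\in F$ admits $g_f\in G$ with $fg_f\in T$, the finite set $\{fg_f:f\in F\}$ translates into $AB^{-1}$, and the correction is absorbed as $tf\in AB^{-1}g_f^{-1}\subseteq AB^{-1}G^{-1}$. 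With a left-sided covering $f=gu$, $u\in T$, the correcting $g$ sits \emph{between} the translating element and $u$ and cannot be absorbed into $K^{-1}$. (Your parenthetical claim $\mu(i_kC)=\mu(C)$ also already needs cancellativity; only $\mu(i_kC)\geq\mu(C)$ holds in general, though that inequality would suffice for the counting bound.)

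What is actually needed, and what the paper proves, is: if $\mu$ is a left-invariant mean with $\mu(C)>0$, then $CC^{-1}$ is \emph{right} syndetic, i.e.\ $CC^{-1}G^{-1}=S$ for some finite $G$. The maximal-disjoint-translates argument does not transcribe; instead one chooses a finite $G$ with $\mu(GC)>\sup\{\mu(FC):F\subseteq S,\ |F|<\infty\}-\mu(C)$ (possible since $\mu(C)>0$ and the supremum is at most $1$) and observes that $s\notin CC^{-1}G^{-1}$ forces $C\cap sGC=\emptyset$; then, using $\mu(rB)\geq\mu(B)$ for left-invariant $\mu$ and any $r$, one gets $\mu\bigl((\{t\}\cup tsG)C\bigr)\geq\mu(C\cup sGC)=\mu(C)+\mu(sGC)\geq\mu(C)+\mu(GC)$, contradicting the choice of $G$. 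With this lemma in place, the remainder of your proof goes through verbatim and coincides with the paper's argument.
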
\nopagebreak[4]
A subset $P$ of a semigroup $(S,\cdot)$ is \emph{(right) piecewise syndetic }Êif there exists a finite set $K\subseteq S$ such that for each finite set $F\subseteq S$ there is some $t\in S$ such that $tF\subseteq PK\me$.

If $(S,\cdot) $ is an amenable group, Theorem \ref{SemigroupJin}  is equivalent to \cite[Theorem 2]{BeBF09} which asserts that in this setup $AB$ is (right) piecewise syndetic provided that $d^*_R(A),d^*_R(B)>0$.
\section*{3\ \ \ Connections with Bohr sets}
So far, our results about the structure of $A-B$ originated from the fact that for $d^*(C)>0$ the set $C-C$ is syndetic. F\o lner (\cite{Foln54a,Foln54b}) proved a much stronger assertion, namely that $C-C$ is ``almost'' a \emph{Bohr$_0$ set. } Bohr$_0$ sets are the neighborhoods of  $0$ in the Bohr topology. (Equivalently,  $U\subseteq \Z$ is a {Bohr$_0$ set} iff  there exist $\alpha_1, \ldots, \alpha_n\in \T= \R/\Z$ and $\eps>0$  such that $\{k\in \Z: \|k\alpha_1\|, \ldots,\|k\alpha_n\|< \eps\}\subseteq U$.) A \emph{Bohr set} is a translate of a Bohr$_0$ set. Every Bohr set is syndetic, but the converse fails badly. F\o lner showed that if $d^*(C)>0$, then  there exist a Bohr$_0$ set $U$ and set $N\subseteq \Z$ with $d^*(N)=0$ such that $C-C\supseteq  U\setminus N. $ 

 In analogy to piecewise syndetic sets Bergelson, Furstenberg and Weiss introduced piecewise Bohr sets. A set $P$ is piecewise Bohr if it is Bohr on arbitrarily large intervals, that is, if there exist a Bohr set $U$ and a  sequence of intervals $I_n\subseteq \Z, |I_n|\uparrow \infty$ such that $P\supseteq U\cap \bigcup_{n=1}^\infty I_n$. 
  F\o lner's Theorem, $d^*(C)>0$ trivially  implies that $C-C$ is piecewise Bohr. We reprove the following refinement of Jin's Theorem  obtained in \cite{BeFW06}. 
\begin{theorem}\label{BohrJin}
Let $A,B\subseteq \Z, d^*(A), d^*(B)>0$. Then $A-B$ is piecewise Bohr. 
\end{theorem}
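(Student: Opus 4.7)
The plan is to imitate the proof of Theorem \ref{IntegerJin}, but with the observation ``$C-C$ is syndetic'' replaced by its stronger form (noted just above as a triviality from F\o lner): \emph{$C-C$ is piecewise Bohr}. A short compactness argument will then be needed to combine the varying shifted Bohr sets that arise into a single one.

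First, apply Lemma \ref{ShiftT} to pick an ultrafilter $p$ with $d^*(C)>0$, where $C:=(A-p)\cap B$, and note $C-C\subseteq (A-p)-B=:S$. By the preceding remark there exist a Bohr$_0$ set $U=\{k\in\Z:\|k\alpha_j\|<\eps,\ 1\le j\le d\}$ and intervals $I_n$ with $|I_n|\uparrow\infty$ such that $U\cap I_n\subseteq C-C\subseteq S$ for each $n$. Exactly as in Section 1, every $s\in S$ satisfies $A-B-s\in p$, so for each finite set $F_n:=U\cap I_n$ the intersection $\bigcap_{s\in F_n}(A-B-s)\in p$ is nonempty; pick $t_n$ in it, giving $(t_n+U)\cap(t_n+I_n)\subseteq A-B$ with $|t_n+I_n|\to\infty$.

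The translates $t_n+U$ still depend on $n$, and the remaining task is to merge them. Consider $\phi:\Z\to\T^d$, $\phi(k):=(k\alpha_1,\ldots,k\alpha_d)$; by compactness of $\T^d$, pass to a subsequence along which $\phi(t_{n_k})\to\beta$. For a sufficiently large index $k_0$ set $m_0:=t_{n_{k_0}}$, so that $\|m_0\alpha_j-\beta_j\|<\eps/4$, and define $V:=m_0+\{k\in\Z:\|k\alpha_j\|<\eps/4\}$, a genuine Bohr set. A triangle-inequality check (using also $\|\beta_j-t_{n_k}\alpha_j\|<\eps/2$ for $k$ large) shows $V\subseteq t_{n_k}+U$ for all sufficiently large $k$, so $V\cap(t_{n_k}+I_{n_k})\subseteq A-B$ along the subsequence. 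Since $|t_{n_k}+I_{n_k}|\to\infty$, this witnesses $A-B$ as piecewise Bohr. The only genuinely new point compared to Section 1 is this compactness merging of shifted Bohr sets into a single one; the rest is a direct transcription of the ultrafilter argument used to prove Theorem \ref{IntegerJin}.
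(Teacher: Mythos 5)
Your proof is correct and follows essentially the same route as the paper: the paper reduces Theorem \ref{BohrJin} to the remark that a set containing a translate of every finite subset of a piecewise Bohr set is itself piecewise Bohr (stated there as something ``one readily shows''), and your compactness argument on $\T^d$ merging the shifted Bohr sets $t_n+U$ into a single Bohr set $V$ is precisely the verification of that remark in this setting. The rest of your argument is the same ultrafilter reduction to $C-C$ together with F\o lner's theorem, exactly as in the paper.
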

 \begin{proof} As in the case of piecewise syndetic sets one readily shows that if a set $P$ contains a translate of every finite subset of a piecewise Bohr set, then $P$ is piecewise Bohr itself.  In Section 1 we have seen that if $d^*(A), d^*(B)>0$, then $A-B$ contains shifts of all finite pieces of a set $C-C, d^*(C)>0$. By F\o lner's Theorem the latter set is piecewise Bohr, hence $A-B$ is piecewise Bohr as well. \end{proof}

In the spirit of Section 2 it is possible to  proceed along these lines in an abstract countable amenable group. 
This then leads to the amenable analog of Theorem \ref{BohrJin} derived in \cite[Theorem 3]{BeBF09}. 

It is natural to ask whether one can make assertions about the combinatorial richness of the set $A-B$ beyond the fact that is piecewise Bohr. In a certain sense this is not possible. For  every piecewise Bohr set $P\subseteq \Z$, there exist sets $A,B\subseteq \Z$ with $d^*(A), d^*(B)>0$ such that $A-B\subseteq P$ (\cite[Theorem 4]{BeBF09}).

\def\ocirc#1{\ifmmode\setbox0=\hbox{$#1$}\dimen0=\ht0 \advance\dimen0
  by1pt\rlap{\hbox to\wd0{\hss\raise\dimen0
  \hbox{\hskip.2em$\scriptscriptstyle\circ$}\hss}}#1\else {\accent"17 #1}\fi}

\end{document}